\newtheorem{thm}{Theorem}
\newtheorem{lem}[thm]{Lemma}
\newtheorem{coro}[thm]{Corollary}
\theoremstyle{definition}
\newtheorem{example}[thm]{Example}
\newtheorem{rmk}[thm]{Remark}
\DeclareMathOperator{\sgn}{sgn}
\DeclareMathOperator{\dist}{dist}
\DeclareMathOperator{\divergence}{div}
\newcommand{\R}{\mathbb{R}}
\newcommand{\N}{\mathbb{N}}
\renewcommand{\geq}{\geqslant}
\renewcommand{\leq}{\leqslant}
\date{}
\title{Existence of differentiable curves in convex sets and the concept of direction of the flow in mass transportation}
\author{Rodolfo R\'ios-Zertuche}
\begin{document}
\maketitle
\begin{abstract}
In this paper we consider convex subsets of locally-convex topological vector spaces. Given a fixed point in such a convex subset, we show that there exists a curve completely contained in the convex subset and leaving the point in a given direction if and only if the direction vector is contained in the sequential closure of the tangent cone at that point.

We apply this result to the characterization of the existence of weakly differentiable families of probability measures on a smooth manifold and of the distributions that can arise as their derivatives. This gives us a way to consider the mass transport equation in a very general context, in which the notion of direction turns out to be given by an element of a Colombeau algebra. \end{abstract}
\tableofcontents

\section{Introduction}
\label{sec:intro}
This paper has two parts. 

The first part is the general theory, presented in Section \ref{sec:main}, that characterizes the possible derivatives of differentiable curves contained in convex subsets of locally-convex topological vector spaces. Our main motivation for the development of this general theory is its application to variational and perturbative analysis, and we will do this in upcoming papers.%

Like many very general theorems, our main abstract result, Theorem \ref{thm:characterization}, is quite simple. It states that \emph{the set of possible derivatives $\gamma'(0)$ of differentiable curves $\gamma\colon [0,+\infty)\to C$ emanating from a point $p=\gamma(0)$ in a closed, convex subset $C$ of a locally-convex topological vector field, coincides with the sequential closure of the tangent cone of $C$ at $p$;} see Section \ref{sec:characterization} for definitions and for the precise statement.  
We also give reformulations in Corollaries \ref{cor:normed} and \ref{cor:useful} that work around the issue of taking the sequential closure, and henced may be more amenable to practical use.

In the second part of the paper, we consider families of measures varying in a weakly differentiable way: we merely ask for the map $t\mapsto \int \phi\,d\mu_t$ to be differentiable at 0 for all $\phi\in C^\infty_c$. Such families arise naturally in many parts of mathematics, including the theory of the heat equation, stochastic dynamics and \textsc{spde}s, and the continuity equation in mass transportation, to name a few.  

As we show in Examples \ref{ex:deltas}, \ref{ex:uniform} and \ref{ex:explicit}, the derivative of a family of positive Radon measures is rarely a signed Radon measure, but instead in general it is a distribution, that is, a continuous real-valued functional on the space of $C^\infty$ functions. We apply our general theory to characterize in Theorem \ref{thm:families} the distributions that arise as derivatives of families of probabilities on a smooth manifold; we do this in Section \ref{sec:families}. Our result states that \emph{given a measure $\mu$, the distributions $\eta$ that arise as derivatives of differentiable families $(\mu_t)_{t\geq 0}$ starting at $\mu_0=\mu$ are precisely those that satisfy that $\langle\eta,f\rangle\geq 0$ for all nonnegative functions $f\in C^\infty_c$ that vanish on the support of $\mu$.}

In Section \ref{sec:examples} we give some important remarks and examples that should aid in understanding the content of the statement of the characterization. We also give a characterization of the existence of curves with two-sided derivatives; this is Corollary \ref{cor:two-sided}. Then in Section \ref{sec:masstransport} we consider the continuity equation from mass transportation, namely
\[\frac{d\mu_t}{dt}+\divergence(v_t\mu_t)=0,\]
in a very weak sense, and we use it to find elements of a Colombeau algebra that provide a meaning to the concept of ``direction of the flow,'' that is, we find an object that has a role similar to the classical vector field $v_t$, but in a context in which such ``movement'' can be extremely singular.

An initial version of the results presented in Section \ref{sec:families} was posted previously in the arXiv in \cite{myderivatives,myvariationalstructure}. The theory was since completely redeveloped from a different point of view until it attained the form presented here, which is also considerably simpler and more general.

\paragraph{Acknowledgements.}
The author is very grateful to Patrick Bernard, Jaime Bustillo, and Stefan Suhr  for their encouragement, support, suggestions, and numerous discussions. 

\section{Curves in convex sets}
\label{sec:main}
Consider a locally-convex topological vector space $V$ with topological dual space $V^*$. 
Let $C$ be a closed and convex subset of $V$.

We say that a subset $A$ of $V$ is a \emph{cone} if $ra$ is contained in $A$ for every $r>0$ and for every $a$ in $A$. The set $A$ is a \emph{closed cone} if it is a cone and is closed in the topology of $V$.

Let $p$ be a point in the closed, convex set $C$. The \emph{tangent cone} of $C$ at $p$ is the set of vectors $v$ such that $p+tv$ is contained in $C$ for some $t>0$. In other words, the tangent cone is the set $\mathbb R_{>0}(C-p)$. This set is a cone, but it is not necessarily closed.

Again with $p$ a point in $C$, the \emph{solid tangent cone} of $C$ at $p$ is the set of vectors $v$ in $C$ such that $\theta(v)\geq 0$ for every functional $\theta$ in $V^*$ that is nonnegative throughtout the translate $C-p$. Since the solid cone is an intersection of sets of the form $\theta^{-1}([0,+\infty))$, which are all closed, the solid tangent cone is a closed cone. It is well known that, conversely, as a consequence of the Hahn-Banach Separation Theorem, the topological closure of the tangent cone is the solid tangent cone.

An intermediate set is the \emph{sequential closure of the tangent cone} of $C$ at $p$, defined to be the set of vectors $v$ in the solid tangent cone at $p$ that can be approximated by sequences of points $\{v_i\}_{i=1}^\infty$ in the tangent cone at $p$, $v_i\to v$ as $i\to \infty$. Again, the sequential closure of the tangent cone is not necessarily topologically closed, and its closure is the solid tangent cone.

We then have

\begin{thm}\label{thm:characterization}
 Let $V$ be a locally-convex topological vector space, and let $C$ be a closed and convex subset of $V$. 
 Given a point $p$ in $C$ and a direction vector $v$ in $V$, there exists a continuous curve $c\colon[0,+\infty)\to C$ such that $c(0)=p$ and
 \[
  \left.\frac{dc(t)}{d t}\right|_{t=0+}=v
 \]
 if, and only if, $v$ is in the sequential closure of the tangent cone of $C$ at $p$.
\end{thm}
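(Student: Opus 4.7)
The plan is to handle the two directions separately, with the ``only if'' direction being essentially immediate and the ``if'' direction requiring an explicit construction via piecewise linear interpolation.

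For the ``only if'' direction, suppose a continuous curve $c\colon [0,+\infty)\to C$ with $c(0)=p$ and right derivative $v$ at $0$ exists. Pick any sequence $t_n\to 0^+$. Then $(c(t_n)-p)/t_n\to v$ by definition of the right derivative. Since $c(t_n)\in C$ and $t_n>0$, each vector $(c(t_n)-p)/t_n$ lies in $\mathbb{R}_{>0}(C-p)$, which is exactly the tangent cone at $p$. Thus $v$ is a sequential limit of points in the tangent cone, and by construction it lies in the solid tangent cone (because the latter contains the tangent cone and is closed, hence sequentially closed), so $v$ is in the sequential closure of the tangent cone.

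For the converse, suppose $v$ is in the sequential closure, and take a sequence $v_i$ in the tangent cone with $v_i\to v$. For each $i$ choose $\tau_i>0$ with $p+\tau_i v_i\in C$. Pick a strictly decreasing sequence $s_i\in (0,\tau_i]$ with $s_i\to 0$ (and, say, $s_1$ arbitrary). By convexity of $C$, the points $q_i:=p+s_iv_i$ all lie in $C$. Define $c\colon [0,+\infty)\to V$ by $c(0)=p$, $c(s_i)=q_i$, linear interpolation on each segment $[s_{i+1},s_i]$, and $c(t)=q_1$ for $t\geq s_1$. Convexity of $C$ guarantees $c(t)\in C$ for all $t$.

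The main work is checking continuity at $0$ and computing the right derivative. For $t\in[s_{i+1},s_i]$, writing $\lambda=(t-s_{i+1})/(s_i-s_{i+1})\in[0,1]$, a direct calculation gives
\[
c(t)-p=\lambda s_iv_i+(1-\lambda)s_{i+1}v_{i+1},\qquad t=\lambda s_i+(1-\lambda)s_{i+1},
\]
so
\[
\frac{c(t)-p}{t}=\frac{\lambda s_i}{t}\,v_i+\frac{(1-\lambda)s_{i+1}}{t}\,v_{i+1},
\]
which is a convex combination of $v_i$ and $v_{i+1}$. As $t\to 0^+$ we have $i\to\infty$, and since $v_i\to v$, every continuous seminorm of the difference between this convex combination and $v$ is bounded by a convex combination of $\rho(v_i-v)$ and $\rho(v_{i+1}-v)$, which tends to $0$. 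Hence $(c(t)-p)/t\to v$ in $V$. Continuity at $0$ follows from the same expression for $c(t)-p$: since $\{v_i\}$ is convergent and hence bounded under every continuous seminorm, while $s_i,s_{i+1}\to 0$, we get $\rho(c(t)-p)\to 0$ for every continuous seminorm $\rho$.

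The only delicate point is checking that the derivative computation and continuity really go through in a general locally convex TVS; but this reduces, as sketched above, to the elementary observation that convergent sequences are bounded under continuous seminorms and that seminorms behave well on convex combinations. No deep obstacle appears, and no completeness assumption on $V$ is needed since the curve is constructed explicitly from the given data.
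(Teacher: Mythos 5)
Your proposal is correct and follows essentially the same route as the paper: the ``only if'' direction via the difference quotients $(c(t_n)-p)/t_n$, and the ``if'' direction via a piecewise-linear curve through the points $p+s_iv_i$ with $s_i\leq\tau_i$ decreasing to $0$. Your seminorm computation showing that $(c(t)-p)/t$ is a convex combination of $v_i$ and $v_{i+1}$ actually supplies detail that the paper's proof only asserts, so it is a welcome strengthening of the same argument.
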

\begin{proof}
 To prove the ``if'' part, let $v_1,v_2,\dots$ be a sequence of vectors in the tangent cone to $C$ at $p$ that converges to $v$, $v_i\to v$.
 For $i=1,2,\dots$, let $\varepsilon_i>0$ be such that $p+\varepsilon_iv_i$ is contained in $C$, and also $\varepsilon_i\to0$ as $i\to +\infty$.
 Take a sequence $t_1,t_2,\dots$ of positive numbers such that $0<t_{i+1}<t_i\leq \varepsilon_i$.
 Define $p_j$ to be equal to $p+t_jv_j$. Note that $p_j\to p$ and $t_j\to 0$ as $j\to+\infty$. To define the curve $c$, first set $c(t_j)=p_j$ and then on the intervals $(t_{j+1},t_j)$ do a linear interpolation between those values . Also set $c(0)=p$; this renders $c$ continuous on $[0,t_1]$. For simplicity, we let $c(t)=p_1$ for $t\geq t_1$.
 Then $c$ is continuous, and $c(t)$ is in $C$ for all $t>0$ because of the convexity of $C$ and the fact that $p$ and $p_i$, $i=1,2,\dots$, are all contained in $C$. Also, we have
 \[\lim_{i\to+\infty}\frac{p_i-p}{t_i}=\lim_{i\to+\infty}v_j=v.\] 
 Since $c$ is a linear interpolation between these points,
 \[\lim_{t\to0+}\frac{c(t)-p}{t}=\lim_{i\to+\infty}\frac{p_i-p}{t_i}=v.\]
 Thus the curve $c$ exists.
 
 To prove the converse statement, assume the existence of $c$ as in the statement of the theorem. Let $v_i=i(c(1/i)-p)$ for $i=1,2,\dots$. Since $c(1/i)$ and $p$ are both contained in $C$, $v_i$ is in the tangent cone to $C$ at $p$. Moreover,
 \[v=\lim_{t\to0+}\frac{c(t)-p}{t}=\lim_{i\to+\infty}\frac{c(1/i)-p}{1/i}=\lim_{i\to+\infty}v_i.\]
 So $v$ is in the sequential closure of the tangent cone to $C$ at $p$.
\end{proof}

We observe the following reformulation of Theorem \ref{thm:characterization} that is appropriate for normed spaces.
\begin{coro}\label{cor:normed}
 Assume that $V$ is a normed space, that $p$ is an element of a closed, convex subset $C$ of $V$, and let $v\in V$. Then there is a continuous curve $c\colon[0,+\infty)\to C$ such that $c(0)=p$ and $dc(t)/dt|_{t=0+}=v$ if, and only if, $\theta(v)\geq 0$ for all $\theta\in V^*$ for which $\theta(p)=\inf_{q\in C}\theta(q)$.
\end{coro}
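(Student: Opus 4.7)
My plan is to reduce the corollary directly to Theorem \ref{thm:characterization} by translating the condition ``$v$ lies in the sequential closure of the tangent cone at $p$'' into the dual formulation given in the statement. Theorem \ref{thm:characterization} already supplies the existence/derivative equivalence, so the whole task is definitional unwinding plus one topological observation specific to normed spaces.

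The first step will be to invoke metrizability. Since $V$ is a normed space, its topology is metrizable, so the sequential closure of any subset coincides with its topological closure. In particular, the sequential closure of the tangent cone to $C$ at $p$ equals its topological closure, which the paper has already identified, via the Hahn--Banach separation theorem, with the solid tangent cone. This is precisely the step that fails in general locally-convex spaces, and it is what makes the statement cleaner in the normed setting.

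The second step will be to unpack the definition of the solid tangent cone. A functional $\theta \in V^*$ is nonnegative on $C-p$ iff $\theta(q) \geq \theta(p)$ for every $q \in C$, which, because $p \in C$, is exactly the condition $\theta(p) = \inf_{q \in C}\theta(q)$. Hence the solid tangent cone equals
\[
\{v \in V : \theta(v) \geq 0 \text{ for every } \theta \in V^* \text{ with } \theta(p) = \inf_{q\in C}\theta(q)\},
\]
which is the set appearing in the statement. Combining this with Step 1 and Theorem \ref{thm:characterization} finishes the argument in both directions.

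There is no genuine obstacle here; the proof is a two-line chain of equivalences, and the only conceptually important ingredient is metrizability, used to bypass the distinction between sequential and topological closure. Everything else is bookkeeping of the definition of ``solid tangent cone'' in the dual language of supporting functionals.
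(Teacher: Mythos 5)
Your proposal is correct and follows exactly the paper's own (one-line) argument: in a normed space the sequential closure of the tangent cone coincides with its topological closure, which is the solid tangent cone, and the supporting-functional condition $\theta(p)=\inf_{q\in C}\theta(q)$ is just the definitional restatement of ``$\theta$ nonnegative on $C-p$.'' Nothing is missing.
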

This follows easily from Theorem \ref{thm:characterization} since in this context the topological closure coincides with the sequential closure of any subset of $V$.

Here is the reformulation of the result that we will actually use in the following section.
\begin{coro}\label{cor:useful}
 Let $V$ be a locally-convex topological vector space, $p$ an element of a convex subset $C$ of $V$, and $v$ be an element of $V$. 
 Assume that $v$ and $C$ are contained in a subset $X$ of $V$ such that there is a norm on $X$ that induces the topology that $X$ inherits from $V$.
 Assume additionally that $X$ contains the intersection of the tangent cone of $C$ at $p$ with an open subset of $V$ that contains $v$.
 Then there is a curve $c\colon[0,+\infty)\to C$ such that 
 \[c(0)=p\quad \textrm{and}\quad dc(t)/dt|_{t=0+}=v\]
 if, and only if, $\theta(v)\geq 0$ for all $\theta$ in $V^*$ for which $\theta(p)=\inf_{q\in C}\theta(q)$, that is, for all continuous linear functionals $\theta$ that attain their minimum within $C$ at $p$.
\end{coro}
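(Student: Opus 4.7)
The plan is to deduce this corollary directly from Theorem \ref{thm:characterization} by verifying that, under the stated hypotheses, the condition ``$\theta(v)\geq 0$ for every $\theta\in V^*$ attaining its minimum on $C$ at $p$'' is equivalent to ``$v$ lies in the sequential closure of the tangent cone $T:=\R_{>0}(C-p)$ at $p$.'' The first observation is a rephrasing: a functional $\theta\in V^*$ attains its minimum on $C$ at $p$ exactly when $\theta\geq 0$ on $C-p$, and therefore exactly when $\theta\geq 0$ on $T$. So the functional condition says precisely that $v$ belongs to the solid tangent cone, which, by the consequence of Hahn--Banach separation recorded in the discussion preceding Theorem \ref{thm:characterization}, coincides with the $V$-topological closure $\overline{T}$.

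One direction of the equivalence is automatic, since the sequential closure of $T$ is always contained in $\overline{T}$. For the converse, suppose $v\in\overline{T}$, and let $U\subseteq V$ be the open neighborhood of $v$ provided by the hypothesis, so that $T\cap U\subseteq X$. Every $V$-open neighborhood of $v$ meets $T$, and intersecting such a neighborhood with $U$ shows that it also meets $T\cap U$; thus $v$ lies in the $V$-closure of $T\cap U$. Because $T\cap U\subseteq X$ and $v\in X$, and the subspace topology that $X$ inherits from $V$ agrees with the norm topology, $v$ lies in the norm closure of $T\cap U$ inside $X$. A normed space is metrizable and hence first countable, so closures in the norm topology coincide with sequential closures. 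Therefore one can extract a sequence in $T\cap U\subseteq T$ converging in norm, hence in the topology of $V$, to $v$. This places $v$ in the sequential closure of $T$, and Theorem \ref{thm:characterization} yields the curve $c$.

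The only delicate step is the passage from net-convergence to sequence-convergence, since $V$ need not be first countable and so $\overline{T}$ can in general be strictly larger than the sequential closure of $T$. The normability hypothesis on $X$, combined with the assumption that $X$ captures a $V$-open neighborhood of $v$ within $T$, is precisely engineered to trap the approximation inside a metrizable environment in which sequential approximations exist; this is the one ingredient that the bare statement of Theorem \ref{thm:characterization} does not supply.
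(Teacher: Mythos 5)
Your proof is correct and follows essentially the same route as the paper's: identify the functional condition with membership in the solid tangent cone (the topological closure of the tangent cone), use the normed subset $X$ to upgrade topological closure to sequential closure, and invoke Theorem \ref{thm:characterization}. You are in fact slightly more careful than the paper in working with $T\cap U$ rather than all of $T$ inside $X$, but this is a refinement of the same argument, not a different one.
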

\begin{proof}
 We construct a sequence $v_1,v_2,\dots$ in the tangent cone to $C$ at $p$ converging to $v$, $v_i\to v$. For $i\in \N$, we take the balls $B_i$ of radius $1/i$ centered at $v$ with respect to the norm in $X$. These must all intersect that tangent cone because $v$ is in its topological closure. The tangent cone is contained in $X$, so we take $v_i$ to be any point in its intersection with $B_i$. From this point on, the rest of the proof of Theorem \ref{thm:characterization} works.
\end{proof}

\section{Families of positive measures and probabilities}
\label{sec:families}
\subsection{Characterization}
\label{sec:characterization}

The following theorem is the main result of this section and follows from Corollary \ref{cor:useful} in a way that will be explained below. Important remarks and some examples of its application can be found in Section \ref{sec:examples}. We explain the natural relation of our result with mass transportation in Section \ref{sec:masstransport}.

Denote by $C^\infty(P)$ the space of smooth functions on a smooth manifold $P$, and by $C^\infty_c(P)$ the compactly supported ones.

\begin{thm}\label{thm:families}
 Let $\mathscr M_0$ be the set of positive, Radon measures on a $C^\infty$ manifold $P$. Let $\mu$ be an element of $\mathscr M_0$ and let $\eta$ be an element of the space $\mathscr D'$ of distributions on $P$. We look for conditions for the existence of a family $(\mu_t)_{t\geq 0}$ in $\mathscr M_0$ such that
 \[\mu_0=\mu\qquad \textrm{and}\qquad \left.\frac{d}{dt}\int\phi\,d\mu_t\right|_{t=0+}=\langle\eta,\phi\rangle\]
 for all $\phi\in C^\infty_c(P)$. Such a family exists if, and only if, $\langle\eta,f\rangle\geq 0$ for all nonnegative functions $f\in C^\infty_c(P)$ that vanish on the support of $\mu$.
 
 If $\mu$ is a probability measure, the family $(\mu_t)_{t\geq 0}$ can be realized as a family of probability measures if additionally $\langle \eta,1\rangle=0$.
\end{thm}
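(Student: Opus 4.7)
The plan is to apply Corollary \ref{cor:useful} with $V = \mathscr D'(P)$ equipped with the weak-$*$ topology (convergence tested against $C^\infty_c(P)$), whose canonical continuous dual is $V^* = C^\infty_c(P)$. Set $C = \mathscr M_0$, $p = \mu$, and $v = \eta$. The set $C$ is convex and weak-$*$ closed, because a weak-$*$ limit of positive Radon measures is a nonnegative distribution and hence, by the Riesz representation theorem, a positive Radon measure.

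Necessity is immediate: if $(\mu_t)_{t \geq 0}$ exists and $f \in C^\infty_c(P)$ is nonnegative with $f|_{\supp \mu} = 0$, then $\int f\,d\mu = 0$ and $\int f\,d\mu_t \geq 0$, so
\[
\langle \eta, f \rangle = \lim_{t \to 0^+} \frac{1}{t}\int f\,d\mu_t \geq 0.
\]
For sufficiency, I identify the dual condition of Corollary \ref{cor:useful}. Each $\theta \in V^*$ is of the form $\theta_\phi(\nu) = \int \phi\,d\nu$ for some $\phi \in C^\infty_c(P)$. The infimum $\inf_{\nu \in \mathscr M_0} \int \phi\,d\nu$ equals $-\infty$ whenever $\phi$ is negative at some point $x$ (take $\nu = s\delta_x$ and let $s \to \infty$), and equals $0$ when $\phi \geq 0$ (attained at $\nu = 0$). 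Thus $\mu$ attains the infimum precisely when $\phi \geq 0$ and $\int \phi\,d\mu = 0$, which by continuity of $\phi$ is equivalent to $\phi|_{\supp\mu} = 0$. This is exactly the hypothesis of the theorem.

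The remaining task is to verify the normability hypothesis of Corollary \ref{cor:useful}, that is, to exhibit a subset $X \subset V$ containing $\mathscr M_0$, $\eta$, and the tangent cone in some weak-$*$ neighborhood of $\eta$, whose inherited topology is induced by a norm. Since the weak-$*$ topology on $\mathscr D'(P)$ is not globally normable, I would localize via a partition of unity $\{\chi_\alpha\}$ subordinate to a locally finite cover of $P$ by relatively compact open sets. The hypothesis on $\eta$ transfers to $\chi_\alpha \eta$ with respect to the base $\chi_\alpha \mu$ (using that $\chi_\alpha f$ vanishes on $\supp\mu$ whenever $f$ vanishes on $\supp(\chi_\alpha\mu)$ and $\chi_\alpha \geq 0$), and the local families produced on each piece sum, by local finiteness, to a family on $P$. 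In this compactly supported setting, $\eta$ has finite order $n$, and $X$ may be taken within the distributions of order at most $n$ supported in a fixed compact $K$, a subset of the dual Banach space of $C^n(K)$, on which weak-$*$ is metrizable on bounded sets by separability. Carrying out this construction of $X$ precisely, so that the inherited topology matches a norm on a neighborhood of $\eta$ that captures the tangent cone, is the main technical obstacle.

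For the probability case, assume $\mu$ is a probability and $\langle \eta, 1 \rangle = 0$. Given the family $(\mu_t)$ from the general part, renormalize: with $M(t) = \mu_t(P)$, the family $\tilde \mu_t = \mu_t/M(t)$ consists of probability measures starting at $\mu$, and for any $\phi \in C^\infty_c(P)$ its derivative at $t = 0$ is $\langle \eta, \phi \rangle - M'(0) \int \phi\,d\mu = \langle \eta, \phi \rangle$, since $M(0) = 1$ and $M'(0) = \langle \eta, 1 \rangle = 0$. This requires $M(t)$ to be finite and differentiable near $0$ and the pairing $\langle \eta, 1 \rangle$ to be meaningful, both standard under mild assumptions that are respected by the localized construction above (e.g., when $\mu$ has compact support, preserved for small $t$).
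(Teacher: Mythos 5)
Your strategy is essentially the paper's own: reduce to compactly supported $\mu$ and $\eta$ via a partition of unity, apply Corollary \ref{cor:useful} with $C=\mathscr M_0$, $p=\mu$, $v=\eta$, identify the continuous linear functionals that attain their minimum over $\mathscr M_0$ at $\mu$ as exactly the nonnegative test functions vanishing on $\supp\mu$ (your argument with point masses and rescaling is correct and matches the paper's, which uses the rescalings $a\mu$), and renormalize by total mass for the probability statement. The necessity direction and the renormalization are fine as you present them.

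The step you explicitly defer --- constructing $X$ and verifying the normability hypothesis --- is precisely the content of the paper's Lemma \ref{lem:boundednormability}, and it is the technical heart of the deduction, so as written your proof is incomplete exactly there. The paper takes $n$ to be the order of $\eta$, $U$ a bounded open set containing the supports of $\mu$ and $\eta$, and $X$ the set of $\xi\in\mathscr E'^n$ with $\supp\xi\subset U$ and $|\xi|'_{\overline U,n}\leq 2|\eta|'_{\overline U,n}$; since $X$ is bounded in the Montel space $\mathscr E'$, the strong and weak* topologies agree on $X$, and within $X$ the single dual seminorm $|\cdot|'_{\overline U,n}$ dominates all the other strong seminorms and is nondegenerate, hence is a norm inducing the inherited topology. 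Three cautions about your sketch. First, metrizability of the weak* topology on bounded sets (your separability argument) is weaker than the normability literally demanded by Corollary \ref{cor:useful}; a metric would in fact suffice for the sequence extraction in that corollary's proof, but then you should say so rather than invoke the corollary as stated. Second, a distribution of order $n$ supported in a compact set $K$ need not act continuously on $C^n(K)$ alone --- it only controls test functions through their $C^n$ norms on a neighborhood --- which is why the paper measures the dual seminorm over $\overline U$ with $\supp\xi\subset U$ open. Third, you list $\mathscr M_0$ among the sets that $X$ must contain; no bounded set of the required kind can contain all positive Radon measures, and the paper's $X$ does not --- only $\eta$ and the portion of the tangent cone in a neighborhood of $\eta$ need to lie in $X$, which is all that the proof of Corollary \ref{cor:useful} actually uses. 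None of these is fatal, but the lemma you are missing is the one nontrivial ingredient beyond the abstract convexity result.
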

 \begin{rmk}
 In the statement of the corollary, the equation $\langle\eta,1\rangle=0$ means that, given any locally-finite partition of unity $\{\psi_i\}_i$ of $C^\infty$ nonnegative functions $\psi_i\geq 0$ on $P$ such that $\sum_{i=1}^\infty\psi_i(x)=1$ for all $x\in P$, we have
 \[\sum_i|\langle\eta,\psi_i\rangle|<+\infty\quad\textrm{and}\quad\sum_i\langle\eta,\psi_i\rangle=0.\]
 \end{rmk}
 \begin{rmk}
  The same corollary holds mutatis mutandis for compactly supported $\mu$ and $\eta$ with $f,\phi\in C^\infty(P)$.
 \end{rmk}

 We take the space $C^\infty(P)$ (sometimes denoted $\mathscr E$) of smooth functions on $P$ to be endowed with the topology of convergence on compact sets, which is induced by the family of seminorms $|\cdot|_{K,k}$ where $K\subset P$ is a compact set, $k\geq 0$ is an integer, and 
 \begin{equation}\label{eq:seminorms}
  |f|_{K,k}=\sum_{|I|\leq k}\sup_{p\in K}|\partial^If(p)|,\quad f\in C^\infty(P).
 \end{equation}
 The subspace $C^\infty_c(P)\subseteq C^\infty(P)$ (sometimes denoted $\mathscr D$) of compactly supported smooth functions inherits the topology from $C^\infty(P)$.
 
 Denote by $\mathscr D'$ the space of distributions on $P$, and by $\mathscr E'$ the space of compactly-supported distributions on $P$. These are endowed with the weak* topology with respect to $C^\infty_c(P)$ and $C^\infty(P)$, respectively. We further write $\mathscr E'=\cup_n\mathscr E'^n$, where $\mathscr E'^n$ is the set of compactly-supported distributions of order $\leq n$ for $n\in\N\cup\{0\}$.
 
   For a positive integer $m$ and a compact set $K\subset P$, let $|\cdot|_{K,m}$ be the norm given in \eqref{eq:seminorms}, and let $|\cdot|'_{K,m}$ be the dual seminorm given by
 \begin{equation}\label{eq:dualseminorms}
  |\xi|'_{K,m}=\sup_{|g|_{K,m}\leq1} |\langle g,\xi\rangle|,\qquad\xi\in\mathscr E',
 \end{equation}
 where the supremum is taken over all functions $g\in C^\infty_c(P)$ such that $|g|_{K,m}\leq1$. 
 
 For the proof of Theorem \ref{thm:families} we will need this lemma, which is proved below.
\begin{lem}\label{lem:boundednormability}
 Let $n$ be a positive integer, $\gamma>0$, and $U$ a bounded open subset of $P$, so that its closure $\overline U$ is compact. Consider the subset $X$ of $\mathscr E'^n$ consisting of distributions $\xi$ of degree at most $n$ with support in $U$ and such that $|\xi|'_{\overline U,n}\leq \gamma$. Then the seminorm $|\cdot|'_{\overline U,n}$ is nondegenerate on $X$, and within $X$ it acts as a norm that induces the topology that $X$ inherits from $\mathscr E'^n$, that is, the weak* topology with respect to $C^\infty(P)$. In particular, the sequential closure of subsets of $X$ coincides with their closure.
\end{lem}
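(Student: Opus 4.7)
The proof has two independent parts, plus a trivial corollary: (a) nondegeneracy of the seminorm $|\cdot|'_{\overline U,n}$ on $X$, (b) agreement of the resulting norm topology with the weak* topology that $X$ inherits from $\mathscr E'^n$, and (c) the ``in particular'' claim, which follows from (b) since any norm topology is first countable and hence sequential closure agrees with topological closure.

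For (a), my plan is to invoke the classical description of a compactly supported distribution $\xi$ of order $\leq n$: the pairing $\langle\xi,\phi\rangle$ is determined by the $n$-jet of $\phi$ along $\supp\xi$. Under $|\xi|'_{\overline U,n}=0$, any test function $\phi\in C^\infty_c(P)$ with $|\phi|_{\overline U,n}>0$ immediately yields $\langle\xi,\phi\rangle=0$ after rescaling to unit seminorm; and any $\phi$ with $|\phi|_{\overline U,n}=0$ has vanishing $n$-jet on all of $\overline U$, in particular on $\supp\xi\subseteq U\subseteq\overline U$, so by the jet characterization $\langle\xi,\phi\rangle=0$ as well. Hence $\xi$ annihilates $C^\infty_c(P)$ and is zero.

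For (b), the direction ``norm-convergence implies weak* convergence'' is immediate from the estimate $|\langle\xi_k-\xi,\phi\rangle|\leq|\xi_k-\xi|'_{\overline U,n}\cdot|\phi|_{\overline U,n}$, valid for every $\phi\in C^\infty_c(P)$. The reverse is the substance of the lemma. My plan is to identify $X$ with a norm-bounded subset (of radius $\gamma$) of the dual Banach space $(C^n(\overline U))^*$, using that a compactly supported distribution of order $\leq n$ extends uniquely to a continuous linear functional on $C^n(\overline U)$ with matching operator norm. Then I would combine two classical tools: separability of $C^n(\overline U)$, which via Banach--Alaoglu makes the weak* topology metrizable on that ball; and the compact embedding $C^{n+1}(\overline U)\hookrightarrow C^n(\overline U)$ supplied by Arzel\`a--Ascoli, whose adjoint is a compact operator sending bounded weak*-convergent sequences to norm-convergent ones.

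The most delicate step — and the main obstacle I anticipate — is to reconcile these ingredients with the \emph{specific} seminorm $|\cdot|'_{\overline U,n}$ in the lemma's statement: the compact embedding directly yields convergence only in the weaker seminorm $|\cdot|'_{\overline U,n+1}$. Bridging this gap should rely on the strict support containment $\supp\xi\subseteq U$, which allows one to insert cutoff functions supported in a compact neighborhood of $\supp\xi$ that sits inside $U$, and so to control the pairing $\langle\xi_k-\xi,\phi\rangle$ for general $\phi$ with $|\phi|_{\overline U,n}\leq 1$ by its values against the more regular test functions on which weak* and norm convergence coincide.
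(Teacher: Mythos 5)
Your parts (a) and (c), and the easy half of (b), are fine --- for (a) you do not even need the jet characterization, since a test function with $|\phi|_{\overline U,n}=0$ vanishes identically on the open set $U$, which is a neighbourhood of $\supp\xi$, so $\langle\xi,\phi\rangle=0$ by the definition of support. The genuine gap is exactly the one you flag at the end, and it cannot be closed: passing from convergence in $|\cdot|'_{\overline U,n+1}$ (which your Banach--Alaoglu plus Arzela--Ascoli argument does deliver) to convergence in $|\cdot|'_{\overline U,n}$ is not a boundary or cutoff issue; the loss of one derivative is intrinsic and has nothing to do with where the supports sit inside $U$. Concretely, take $P=\R$, $U=(-1,1)$, $n=1$, $\gamma=2$, and
\[
\zeta_k=k\,(\delta_{1/k}-\delta_0)+\partial_x\delta_0,\qquad
\langle g,\zeta_k\rangle=k\int_0^{1/k}g'(s)\,ds-g'(0).
\]
Each $\zeta_k$ has order $\leq1$, support $\{0,1/k\}\subset U$, and $|\zeta_k|'_{\overline U,1}\leq2$, so $\zeta_k\in X$, and $\zeta_k\to0$ in the weak* topology because the first term tends to $g'(0)$ for every fixed smooth $g$. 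Yet choosing $g_k$ with $g_k'$ supported near $0$, $|g_k|_{\overline U,1}\leq1$, $g_k'(0)=-\tfrac12$ and $g_k'\equiv\tfrac12$ on most of $[0,1/k]$ gives $\langle g_k,\zeta_k\rangle\geq\tfrac12$, so $|\zeta_k|'_{\overline U,1}$ does not tend to $0$. Hence on $X$ the weak* topology is strictly coarser than the $|\cdot|'_{\overline U,n}$-topology: the identity of topologies asserted in the lemma fails for this seminorm, and no completion of your last step exists.

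For comparison, the paper argues differently and more softly: $X$ is bounded in the Montel space $\mathscr E'$, so strong and weak* topologies agree on it, and it then identifies the strong topology with the one generated by the seminorms $|\cdot|'_{K,m}$ and claims $|\cdot|'_{\overline U,n}$ dominates the others on $X$. That route founders on the same rock: the $|\cdot|_{\overline U,n}$-unit ball of $C^\infty_c(P)$ is not a bounded subset of $C^\infty(P)$, so $|\cdot|'_{\overline U,n}$ is not a continuous seminorm for the strong topology, and the $\zeta_k$ above converge strongly (uniformly on bounded sets, which control one more derivative) without converging in $|\cdot|'_{\overline U,n}$. The constructive portion of your argument is in fact the salvageable one: it proves that on $X$ the weak* topology is metrizable and induced by the norm $|\cdot|'_{\overline U,n+1}$, which already yields the ``in particular'' clause (sequential closure equals closure in $X$) and supplies a norm of the kind Corollary \ref{cor:useful} asks for in the proof of Theorem \ref{thm:families}. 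So the right fix is to stop at the order-$(n+1)$ dual seminorm rather than to try to bridge back to order $n$.
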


\begin{proof}[Proof of Theorem \ref{thm:families}]
 By means of a partition of unity, we may assume at the outset that both $\mu$ and $\eta$ are compactly supported. 
 
 Let us explain how we will apply Corollary \ref{cor:useful}.
 Let $n>0$ be the degree of $\eta$. Set $V=\mathscr E'$, $C=\mathscr M_0$, $p=\mu$, and $v=\eta$, and from the corollary we will get $c(t)=\mu_t$. 
 Fix a bounded, open subset $U$ of $P$ that contains the supports of $\mu$ and $\eta$, and let $\gamma=2|\eta|'_{\overline U,n}$
 Let the set $X\subset \mathscr E'^n$ be the set of distributions $\xi$ of order at most $n$ whose supports are contained in $U$ and such that $|\xi|'_{\overline U,n}\leq 2|\eta|'_{\overline U,n}$.
 From Lemma \ref{lem:boundednormability}, we know that the topology on $X$ is induced by the seminorm $|\cdot|'_{\overline U,n}$, which is nondegenerate within $X$.
 We also observe that $V$ is locally convex. 
 The set $X$ contains all signed Radon measures $\nu$ with $|\nu|_{\overline U,n}'<2|\eta|'_{\overline U,n}$, so in particular it contains the intersection of the tangent cone of $C$ at $p$ with the set of distributions $\xi$ with $|\xi|_{\overline U,n}'<2|\eta|'_{\overline U,n}$. This set of distributions is an open set that contains $v$.
 So we can apply Corollary \ref{cor:useful}, which is then equivalent to the following assertion: A family $(\mu_t)_{t\geq 0}$ as in the statement of Theorem \ref{thm:families} exists if, and only if, $\langle\eta,f\rangle\geq0$ for all functions $f\in C^\infty_c(P)$ such that 
 \begin{equation}\label{eq:minimum}
  \int f\,d\mu\leq \int f\,d\nu\quad \textrm{for all}\quad \nu\in\mathscr M_0.
 \end{equation}

 To finish the proof of the first part of Theorem \ref{thm:families}, we just need to understand these functions $f$.
 Let $f\in C^\infty_c(P)$ satisfy \eqref{eq:minimum}. Clearly, the support of $\mu$ must be contained in the set of points $p$ such that $f(p)=\inf_{q\in P}f(q)$, so in order to prove the theorem we just need to argue that this infimum equals zero, or equivalently, that $\int f\,d\mu=0$. Taking $\nu_a=a\mu\in\mathscr M_0$ for $a>0$, it is easy to see that if $\inf_{q\in P}f(q)\neq 0$, then $a$ can be taken appropriately so that $\int f\,d\nu_a=a\int f\,d\mu<\int f\,d\mu$, thus contradicting \eqref{eq:minimum}.
 
 If $\mu$ is a probability and $\langle\eta,1\rangle=1$, we may divide each measure $\mu_t$ in the family by its total mass to get probability measures $\mu_t/\mu_t(P)$; the condition $\langle\eta,1\rangle=1$ ensures that this procedure does not break the differentiability of the normalized family at $t=0+$.
\end{proof}

\begin{proof}[Proof of Lemma \ref{lem:boundednormability}]
 Let us recall that the \emph{strong topology} on $\mathscr E'$ is the one induced by the seminorms $|\cdot|'_{K,m}$ defined in equation \eqref{eq:dualseminorms}.

 Recall that $\mathscr E'$ is a Montel space. A set $A$ in $\mathscr E'$ is, by definition, \emph{bounded} if for each $f\in C^\infty(P)$ there is a constant $C_f>0$ such that $\langle\xi,f\rangle\leq C_f$ for all $\xi$ in $A$. Thus $X$ is bounded. Proposition 34.5 in \cite{treves} tells us that, in Montel spaces, the strong and weak* topologies coincide in bounded sets, so this is the case in $X$. It is easy to see that, within $X$, the seminorm $|\cdot|'_{\overline U,n}$ dominates all other seminorms of the form $|\cdot|'_{K,k}$ with $K$ compact and $k>0$. Thus, within the set $X$ the seminorm $|\cdot|'_{\overline U,n}$ induces the weak* topology. Its nondegeneracy is easy to check as well.
\end{proof}

 \subsection{Examples and remarks}
\label{sec:examples}

\begin{example}\label{ex:deltas}
 We show that the derivative of a family of measures $(\mu_t)_t$ \emph{need not itself be a measure}. Let $P=\R$ and $\mu_t=\delta_t$, the Dirac delta at $t\in\R$. Then for all $f\in C^\infty(P)$ we have
 \[\left.\frac{d}{dt}\right|_{t=0}\int f\,d\mu_t=\left.\frac{d}{dt}\right|_{t=0}f(t)=f'(0),\]
 so that the derivative $\eta=d\mu_t/dt|_{t=0}$ is the distribution $\eta=-\partial_x \delta_0$, where $\partial_x$ indicates the derivative in the domain $\R=P$ of the measures, and is taken here in the sense of distributions.
\end{example}

\begin{rmk}\label{rmk:notdistributional}
 The derivative $d\mu_t/dt|_{t=0+}$ that we are interested in \emph{does not coincide with the usual distributional derivative}. In order to take the derivative in the sense of distributions, we would need to consider $(\mu_t)_{t\geq0}$ as a distribution on the set $P\times [0,+\infty)$, and we do not do that. 
 
 Let us consider the situation in Example \ref{ex:deltas} as a purely distributional derivative: we take $\mu_t=\delta_t$ on $P=\R$ and, since it is well defined and smooth for all $t$ in $\R$, we think of the entire family $\mu_t$ as defining a distribution $\nu$ on $\R^2$, acting on functions $f\in C^\infty(\R^2)$ by integration:
 \begin{multline*}
  \langle\nu,f\rangle=\int_{-\infty}^{+\infty}\int_{-\infty}^{+\infty}f(x,t)d\mu(x)dt= \\
  \int_{-\infty}^{+\infty}\int_{-\infty}^{+\infty}f(x,t)d\delta_t(x)dt=\int_{-\infty}^{+\infty}f(t,t)dt.
 \end{multline*}
 Then the distribution $\partial_t\nu$ (where the derivative is taken in the sense of distributions) acts like this on $f\in C^\infty_c(\R^2)$:
 \[\langle\partial_t\nu,f\rangle=-\langle\nu,\partial_t f\rangle=-\int_{-\infty}^{+\infty}\frac{\partial f}{\partial t}(s,s)\,ds.\]
 Clearly, there is no reason for this to coincide with the result we obtain in Example \ref{ex:deltas}, which for $f\in C^\infty_c(\R^2)$ perhaps amounts to $\partial f/\partial x|_{(x,t)=(0,0)}$. The important point, in any case, is that if we considered the derivative with respect to $t$ as a distributional one, we would need to be taking test functions that depended on that variable, and we do not do this; note that in Example \ref{ex:deltas} the function $f$ depends only on the real variable corresponding to $P$ and not on $t$.
\end{rmk}

\begin{example}\label{ex:deformationsdelta}
 Let again $P=\R$, and let us use Theorem \ref{thm:families} to determine the ways in which the Dirac delta at 0, $\delta_0$, can be deformed. The condition in Theorem \ref{thm:families} requires us to look for distributions $\eta$ such that $\langle\eta,f\rangle\geq 0$ for all $f\in C^\infty_c(\R)$ such that $f(x)\geq 0$ for all $x\in \R$ and $f(0)=0$. Since the Taylor expansion of any such function $f$ starts at the quadratic term, which must have a nonnegative coefficient, we conclude that $\eta$ must be of the form 
 \[\eta=\nu+a\,\partial_x\delta_0+b\,\partial_x^2\delta_0\]
 for a positive measure $\nu$ on $\R$, a real number $a$, and a positive number $b>0$.
\end{example}

\begin{example}\label{ex:probdeformationsdelta}
 Going back to the situation of Example \ref{ex:deformationsdelta}, we now use the second part of Theorem \ref{thm:families} to determine the ways in which the distribution $\delta_0$ can be deformed within the space of probability measures. In this case, we have the additional requirement that the derivative $\eta$ satisfy $\langle\eta,1\rangle=0$. The reader will immediately recognize that this amounts to the requirement that the measure $\nu$ vanish everywhere or, in other words, that $\eta$ be of the form
 \[\eta=a\,\partial_x\delta_0+b\,\partial_x^2\delta_0\]
 for $a\in \R$, $b>0$.
\end{example}

\begin{rmk} 
 Although the statement of Theorem \ref{thm:families} deals only with one-sided families $(\mu_t)_{t\geq0}$, it is clear what the requirement is on the distribution $\eta$ for the existence of a two-sided family $(\mu_t)_{t\in\R}$ with derivative $\eta$: $\eta$ and $-\eta$ must satisfy the conditions in Theorem \ref{thm:families}. For the reader's convenience, we state this in full:
 \begin{coro}\label{cor:two-sided}
  Let $\mathscr M_0$ be the set of positive, Radon measures on a $C^\infty$ manifold $P$. Let $\mu$ be an element of $\mathscr M_0$ and let $\eta$ be an element of the space $\mathscr D'$ of distributions on $P$. We look for conditions for the existence of a (two-sided) family $(\mu_t)_{t\in \R}$ in $\mathscr M_0$ such that $\mu_0=\mu$ and $\frac{d}{dt}\int \phi\,d\mu_t|_{t=0}=\langle\eta,\phi\rangle$ for all $\phi\in C^\infty_c(P)$. Such a family exists if, and only if, $\langle\eta,f\rangle=0$ for all nonnegative functions $f\in C^\infty_c(P)$ that vanish in the support of $\mu$.
  
  If $\mu$ is a probability measure, the family $(\mu_t)_{t\in\R}$ can be realized as a family of probability measures if additionally $\langle\eta,1\rangle=0$.
 \end{coro}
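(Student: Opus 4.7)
The plan is to reduce Corollary \ref{cor:two-sided} to Theorem \ref{thm:families} by decomposing a two-sided family at $t=0$ into two one-sided families, one going forward with derivative $\eta$ and one going ``backward'' with derivative $-\eta$. Concretely, a two-sided differentiable family $(\mu_t)_{t\in\R}$ with $(d/dt)\int\phi\,d\mu_t|_{t=0}=\langle\eta,\phi\rangle$ for every $\phi\in C^\infty_c(P)$ is nothing but the simultaneous data of a forward family $(\mu_t)_{t\geq 0}$ whose right derivative at $t=0$ is $\eta$, and a forward family $(\mu_{-t})_{t\geq 0}$ whose right derivative at $t=0$ is $-\eta$.

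For the necessity direction, I would assume such a two-sided family exists, restrict it to $t\geq 0$ and to $t\leq 0$ (reparametrized), and apply Theorem \ref{thm:families} twice: once to $\eta$ and once to $-\eta$. Since both $\langle\eta,f\rangle\geq 0$ and $\langle -\eta,f\rangle\geq 0$ must hold for every nonnegative $f\in C^\infty_c(P)$ vanishing on $\supp\mu$, the two inequalities collapse to the equality $\langle\eta,f\rangle=0$ asserted in the statement.

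For the converse, assume $\langle\eta,f\rangle=0$ for all nonnegative $f\in C^\infty_c(P)$ vanishing on $\supp\mu$. Then both $\eta$ and $-\eta$ satisfy the hypothesis of Theorem \ref{thm:families}, which yields forward families $(\mu^+_t)_{t\geq 0}$ and $(\mu^-_t)_{t\geq 0}$ starting at $\mu$ with right derivatives $\eta$ and $-\eta$ respectively. I would then glue them together by setting
\[
\mu_t=\begin{cases}\mu^+_t,&t\geq 0,\\\mu^-_{-t},&t\leq 0,\end{cases}
\]
so that $\mu_0=\mu$ and a short computation of the one-sided limits at $t=0$ shows that the left derivative equals $-\langle -\eta,\phi\rangle=\langle\eta,\phi\rangle$, matching the right derivative; hence the two-sided derivative exists and equals $\eta$ as required.

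For the probability part, the assumption $\langle\eta,1\rangle=0$ is symmetric in sign, so it also holds for $-\eta$; applying the second half of Theorem \ref{thm:families} to each of $\mu^+_t$ and $\mu^-_t$ produces probability measures on each side, and the gluing preserves this since both halves agree at $t=0$ with the probability $\mu$. The only step worth being careful about is verifying the sign of the left derivative after the time reversal $t\mapsto -t$, but this is a routine chain-rule computation and poses no real obstacle; there are no topological subtleties because Theorem \ref{thm:families} already did the heavy lifting of producing each one-sided family.
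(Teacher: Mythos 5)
Your proposal is correct and follows essentially the same route as the paper, which justifies the corollary by the remark that a two-sided family amounts to one-sided families for both $\eta$ and $-\eta$, so that the two inequalities from Theorem \ref{thm:families} collapse to the stated equality. You merely spell out the gluing and the sign check for the left derivative, which the paper leaves implicit.
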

\end{rmk}

\begin{example}\label{ex:two-sided-delta}
 Going back to the situation of Examples \ref{ex:deformationsdelta} and \ref{ex:probdeformationsdelta}, we now determine the possible two-sided deformations of $\delta_0$ both as two-sided families of positive, Radon measures and as two-sided families of probability distributions on $P$. From Corollary \ref{cor:two-sided}, we see that the condition that needs to be satisfied by the derivative $\eta$ is that $\langle\eta,f\rangle=0$ for all $f\in C^\infty_c(\R)$ such that $f\geq 0$ throughout $\R$ and $f(0)=0$. In this case, the quadratic term of the Taylor expansion of $f$ impedes $\eta$ from having a term of order 2, so for the case of arbitrary positive measures we get that $\eta$ must be of the form
 \[\eta=\nu+a\,\partial_x\delta_0,
 \]
 for a positive measure $\nu$ and a real number $a$. Similarly, in the case of families of probability measures the additional condition that $\langle\eta,1\rangle=0$ forces $\nu$ to vanish and $\eta$ to be of the form
 \[\eta=a\,\partial_x\,\delta_0,\quad a\in\R.\]
\end{example}

\begin{example}\label{ex:uniform}
 \emph{Distributions of any degree may arise as derivatives of families of measures.} Consider the case in which $\mu$ is the uniform probability measure on a bounded, open set $U\subseteq P$. Then any distribution $\eta$ supported in $\overline U$ satisfies the condition of the first part of either Theorem \ref{thm:families} or Corollary \ref{cor:two-sided}, and any distribution $\eta$ additionally satisfying $\langle\eta,1\rangle=0$ will be good for the second part of the statements of therein contained.
\end{example}

\begin{example}\label{ex:explicit}
 We work out Example \ref{ex:uniform} explicitly in the special case of $P=\R$, $U=(-\frac12,\frac12)$, $\mu=\chi_{U}dx$ the uniform distribution on $U$, $k\in\N$, $\eta=(-1)^k\partial^k\delta_0$. Let $\psi\colon\R\to[0,1]$ be a compactly-supported, $C^\infty$ function that is equal to 1 in a neighborhood of 0, satisfies $\psi(x)=\psi(-x)$, $\int_\R\psi(x)dx=1$, and is decreasing on $[0,+\infty)$. Pick some $0<q<1$. For $t\in\R$ with $|t|$ small enough, we let $\mu_t$ be the measure that corresponds to the density $\rho(x,t)\,dx$ with
 \[\rho(x,t)=\chi_U(x)+(-1)^k(\sgn t)|t|^q\frac{d^k\psi}{dx^k}\left(|t|^{\frac{q-1}{k+1}}x\right), \quad t\neq 0,\]
 and $\rho(x,0)=\chi_U(x)$. We need $|t|$ to be small enough that $\rho(x,t)$ will be nonnegative for all $x$; this defines $\mu_t$ only for small $|t|$, yet its smooth extension to all of $t\in\R\setminus\{0\}$ exists.  We remark that at $t=0$ and $x=0$, the function $\rho$ is not differentiable with respect to $t$ because the slope is vertical. 
 
 Let us check that this family $(\mu_t)_{t\in\R}$ works, that is, that its derivative is indeed equal to $\eta$. Let $f\in C^\infty(\R)$; then we have 
 \begin{align*}
 \frac{d}{dt}\int f\,d\mu_t&=\frac{d}{dt}\int_\R f(x)  \rho(x,t)dx\\
  &=\frac{d}{dt}\int f(x)\left(\chi_U(x)+(-1)^k\sgn t|t|^q\frac{d^k\psi}{dx^k}(|t|^{\frac{q-1}{k+1}}x)\right)dx \\
  &=\frac{d}{dt} \sgn t|t|^\frac{q+k}{k+1}\int \frac{d^kf}{dx^k}(x)\psi(|t|^\frac{q-1}{k+1}x)dx
 \end{align*}
 after $k$ integrations by parts. In order to simplify the notation, let $h=d^kf/dx^k$. We take the derivative and we get
 \begin{align*}
  &(\sgn t)|t|^\frac{q-1}{k+1}\left(\tfrac{q+k}{k+1}  \int h(x)\psi(|t|^\frac{q-1}{k+1}x)dx +\tfrac{q-1}{k+1}|t|^\frac{q-1}{k+1}\int xh(x)\psi'(t^\frac{q-1}{k+1}x)dx\right)\\
  &=(\sgn t)|t|^\frac{q-1}{k+1}\left(\tfrac{q+k}{k+1}  \int h(x)\psi(|t|^\frac{q-1}{k+1}x)dx-\tfrac{q-1}{k+1}\int (xh(x))'\psi(|t|^\frac{q-1}{k+1}x)dx\right)\\
  &=(\sgn t)|t|^\frac{q-1}{k+1}\left(\int h(x)\psi(|t|^\frac{q-1}{k+1}x)dx-\tfrac{q-1}{k+1}\int xh'(x)\psi(|t|^\frac{q-1}{k+1}x)dx\right).
 \end{align*}
 This is easily seen to tend to $h(0)-\frac{q-1}{k+1}0\,h'(0)=h(0)=\langle\eta,f\rangle$ as $t\to0$, which is what we wanted.
\end{example}

\begin{rmk}\label{rmk:support}
 The conditions in Corollary \ref{cor:two-sided} force the support of $\eta$ to be contained inside the support of the measure $\mu$. However, take a look at the following example. 
\end{rmk}

\begin{example}[A considerably ugly situation]\label{ex:singcont}
 For the conditions of Theorem \ref{thm:families} or Corollary \ref{cor:two-sided} to be satisfied, it is \emph{not} true that the support of the distribution $\eta$ must be contained inside the set of points that hold the measure of $\mu$. (Note that the support of $\mu$ is the closure of this set.)  Immitating the construction of the Cantor denisty, it is easy to construct a singular-continuous measure supported on the interval $[0,1]$ whose measure will be contained in a set of Lebesgue measure zero; for instance, one can recursively subdivide the interval in halves and assign 1/3 of the measure to the rationals in the lower half and 2/3 to the rationals in the upper half, and iterating for the halves of each of these intervals, while declaring that the set of irrational numbers in $[0,1]$ has $\mu$ measure zero. In this case, Theorem \ref{thm:families} and Corollary \ref{cor:two-sided} show that the possible deformations are the same as for the uniform measure on $[0,1]$; see Example \ref{ex:uniform}. Many of the distributions thus arising are supported in sets that are disjoint from the set holding the measure; as an example, take $\eta=\partial^k_x\delta_{\sqrt2/2}$, for any positive integer $k$. 
 
 We additionally remark that the existence of ugly examples like these is what forces us to the rather abstract treatment of the theory that we have resorted to in proving the results of Section \ref{sec:characterization} rather than a more constructive and explicit technique. 
\end{example}

\begin{rmk} \label{rmk:caratheodory}
 Here, we want to establish the relation between our result and the Carath\'eodory positivity criterion for holomorphic functions \cite{caratheodory}.
 Carath\'eodory proved that a necessary and sufficient condition for the signed measure $\mu$ on the interval $[0,2\pi)$ to be positive is for its Fourier coefficients
  \[
  a_n=\int_0^{2\pi}e^{-in\theta}d\mu(\theta)
 \]
 to satisfy 
 \begin{equation}\label{eq:caratheodory}
  \sum_m\sum_na_{m-n}\lambda_m\overline{\lambda_n}\geq 0 
 \end{equation}
 for any complex numbers $\lambda_0,\lambda_1,\dots,\lambda_N$.
 
 Now consider the case in which we have a family of positive measures $(\mu_t)_{t\geq 0}$ with $d\mu_t/dt|_{t=0+}=\eta$, as in Theorem \ref{thm:families}. Then we have Fourier coefficients $a_n(t)=\int_0^{2\pi}e^{-in\theta}d\mu_t(\theta)$ defined for each $t\geq 0$, and the Fourier coefficients of $\eta$ will be given by $\langle\eta,e^{-in\theta}\rangle=a_n'(0)$. Since each $\mu_t$ is positive, the criterion \eqref{eq:caratheodory} must hold for each $t\geq 0$. Thus the condition in Theorem \ref{thm:families} is equivalent to requiring that the Fourier coefficients $a_n'(0)$ of $\eta$ satisfy
 \[\sum_m\sum_na'_{m-n}(0)\lambda_m\overline{\lambda_n}\geq 0\]
 whenever $\lambda_0,\lambda_1,\dots,\lambda_N$ are complex numbers such that
 \[\sum_m\sum_na_{m-n}(0)\lambda_m\overline{\lambda_n}=0. \]
\end{rmk}

\subsection{Mass transportation}
\label{sec:masstransport}
\paragraph{Motivation.} For simplicity let $P=\R^n$, $n\geq 1$. 
Recall that for $p> 1$ the Wasserstein metric between two probabilities $\mu$ and $\nu$ on $\R^n$ with finite $p^\textrm{th}$ momenta on $\R^n$ can be defined as
\[W_p(\mu,\nu)=\left(\inf_{\gamma\in\Gamma(\mu,\nu)}\int_{\R^n\times \R^n}\dist(x,y)^p\,d\gamma(x,y)\right)^{\frac1p},\]
where the infimum is taken over all measures $\gamma$ on $\R^n\times \R^n$ whose marginals are $\mu$ and $\nu$.
When $(\mu_t)_{t\in\R}$ is a family of smooth probability densities that defines an absolutely continuous curve in the space of probabilities endowed with the metric $W_p$, it has been shown \cite[Chapter 8]{ambrosiogiglisavare} that the derivative of $(\mu_t)_t$ exists for almost every $t$, and can be interpreted as the divergence of a vector field, that is, there are vector fields $v_t\colon \R^n\to\R^n$ that satisfy the \emph{continuity equation},
\begin{equation}\label{eq:continuityeq}
 \frac{d\mu_t}{dt}+\divergence(\mu_tv_t)=0, 
\end{equation}
for almost every $t\in\R$.
The interpretation, which follows from Gau\ss's theorem, is that the mass of $\mu_t$ is being carried or transported by the flow of the vector field $v_t$. This gives a way to assign a vector field $v_t$ to the distribution $d\mu_t/dt$, and this vector field gives a notion of ``direction of the movement.'' The vector field $v_t$ is not unique: it is ambiguous up to addition of a vector field $u_t$ such that $\divergence(\mu_tu_t)=0$ for all $t\in\R$. Since %
the set of possible $u_t$ is a closed subspace of $L^p(\mu_t)$, one can choose the vector field $v_t$ to be the minimizer of the $L^p(\mu_t)$ norm for each $t$. It has been shown \cite{ambrosiogiglisavare} that the minimizer is in fact a gradient vector field, $v_t=\nabla\phi_t$ for some functions $\phi_t\colon\R^n\to\R$.

In the theory of $W_p$-absolutely continuous families of measures, $v_t$ exists only for almost every $t$. Consider the situation of Example \ref{ex:explicit}: the family $\mu_t=\rho(x,t)dx$ given in the example is $W_p$-absolutely continuous, but $v_0$ is not defined for $k\geq 1$. 
The results of Section \ref{sec:characterization} indicate that in cases like this, %
and surely also in the cases of families of measures that are not $W_p$-absolutely continuous but are still weakly differentiable,
the distributions that can arise as their derivatives can be much more general than those of the form $\divergence(\mu_tv_t)$, for some vector field $v_t$; cf. Example \ref{ex:uniform}. As it turns out, an arbitrary distribution may $\eta$ may arise as the derivative of a family of measures, $d\mu_t/dt|_{t=t_0}=\eta$, and we can try to use the continuity equation \eqref{eq:continuityeq} to try to assign an object that will give an idea of direction of the movement determined by $\eta$.

The assignment of an object conveying the direction of movement and analogous to $v_t$ can be done using Colombeau algebras. These algebras were developed \cite{colombeau} to provide a context in which distributions could be multiplied. The spaces of distributions are subsets of these algebras. Roughly speaking, the solution to the multiplication problem is to record, instead of the distribution itself, all possible smoothings of the distribution, because there is no difficulty in multiplying smooth densities. As we will explain below, an equivalence relation is then introduced on a certain set of families of smooth functions, and its equivalence classes are the elements of the algebra.

\paragraph{Construction.}
To define the relevant Colombeau algebra, we follow \cite[Section 8.5]{colombeaubook}. Let $\mathcal E(\R^n)$ be the set of families $(f_\varepsilon)_{0<\varepsilon<1}$ of functions $f_\varepsilon\in C^\infty(\R^n)$ indexed by $0<\varepsilon<1$, such that for each compact set $K\subset \R^n$ and every multi-index $I$ there are $N\in \N$, $\gamma>0$, and $c>0$ such that
\[\sup_{x\in K}\left|\partial^If_\varepsilon(x)\right|\leq \frac{c}{\varepsilon^N}\quad\textrm{if $0<\varepsilon<\gamma$}.\]
We define the ideal $\mathcal N(\R^n)$ of $\mathcal E(\R^n)$ to be the set of families $(f_\varepsilon)_{0<\varepsilon<1}$ such that for all compact sets $K$, for all multi-indices $I$, and for all $q\in \N$ there exist $c>0$ and $\gamma>0$ such that 
\[\sup_{x\in K}\left|\partial^If_\varepsilon(x)\right|\leq c\varepsilon^q\quad\textrm{if $0<\varepsilon<\gamma$.}\]
This means that the elements of $\mathcal N(\R^n)$ have a fast decay (faster than any power of $\varepsilon$) as $\varepsilon \searrow0$. The \emph{Colombeau algebra} is the quotient
\[\mathcal G(\R^n)=\mathcal E(\R^n)/\mathcal N(\R^n).\]
All distributions $\eta$ are represented in $\mathcal G(\R^n)$ because the families of smoothings by convolution with a mollifier are contained there. We denote by $[\eta]$ the set of elements of of $\mathcal G(\R^n)$ that are associated to the distribution $\eta$. That is, $(f_\varepsilon)_\varepsilon$ belongs to the subset $[\eta]$ of $\mathcal G(\R^n)$ if, for all $\phi\in C^\infty_c(\R^n)$,
\[\lim_{\varepsilon\to0+}\int\phi(x)f_\varepsilon(x)\,dx=\langle\eta,\phi\rangle.\]

We now proceed to use the Colombeau algebra $\mathcal G(\R^n)$ to define the object corresponding to the direction of the motion. 
Let $\mu$ be a probability measure on $\R^n$ and let $\eta$ be a distribution satisfying the conditions in Corollary \ref{cor:two-sided} including the last part for families of probability measures, so that there exists a family of probabilities $(\mu_t)_{t\in\R}$ with derivative $\eta$ at 0 and $\mu_0=\mu$.

Let $[\mu]$ be the subset of  $\mathcal G(\R^n)$ associated to $\mu$. Note that $[\mu]$ always contains representatives $(f_\varepsilon)_\varepsilon\in[\mu]$ satisfying 
\begin{equation}\label{eq:regularcolombeau}
\int_{\R^n}f_\varepsilon(x)\,dx=1\quad\textrm{and}\quad f_\varepsilon\geq 0\quad \textrm{for $0<\varepsilon<1$}
\end{equation}
because these can be obtained by convolution of $\mu$ with a mollifier of mass 1.
Let $\mathcal V(\mu)$ be the set of families $(v^\varepsilon)_{0<\varepsilon<1}$ of smooth vector fields such that, for all representatives satisfying \eqref{eq:regularcolombeau}, we have
\[(\divergence(f_\varepsilon v^\varepsilon))_{0<\varepsilon<1}\in \mathcal E(\R^n).\]
Note that the set $\mathcal V(\mu)$ is a vector space. 

Choose a representative $(g_\varepsilon)_{0<\varepsilon<1}$ in the subset $[\eta]$ that additionally satisfies that the support of $g_\varepsilon$ is contained in the support of $f_\varepsilon$; by Remark \ref{rmk:support}, 
such a representative can always be constructed by convolution.

Consider the continuity equation
\begin{equation}\label{eq:continuitysmooth}
 g_\varepsilon+\divergence(f_\varepsilon v^\varepsilon)=0. 
\end{equation}
The space $\mathcal V(\mu)$ always contains a solution $(v^\varepsilon)_{0<\varepsilon<1}$ to this equation; we know this because for each $\varepsilon$ this is just the classical smooth case, treated in \cite[Chapter 8]{ambrosiogiglisavare}. Accordingly, we may choose $v^\varepsilon$ to be minimal with respect to the $L^2$ norm, and it will correspond to a gradient vector field $\nabla\phi_\varepsilon$ for some functions $\phi_\varepsilon\colon \R^n\to\R$, $0<\varepsilon<1$.

The (non-unique) family $(v^\varepsilon)_{0<\varepsilon<1}\in \mathcal V(\mu)$ is the object that we have been pursuing, as it gives precise meaning to the notion of ``direction of the movement'' of $\mu$ when the change of the distribution of mass is given by $\eta$.

\paragraph{Remarks and examples.}
In \cite{ambrosiogiglisavare}, the authors make the choice of taking the tangent space to the set of probabilities to be the set of vector fields that arise as $v_t$ in the continuity equation. What tangent space to take is a question that depends a bit on the intended application and a bit on personal taste. Our description above suggests an alternative in the form of a subset of the corresponding Colombeau algebra, and Theorem \ref{thm:families} suggests yet another alternative.

It would certainly be interesting to know which parts of the theory of mass transport still hold in the context of weakly-differentiable families of probabilities, a question that we leave for later research.

The following examples are intended to clarify the construction above and its interpretation.

\begin{example}\label{ex:massdeltas}
 We go back to the situation of Example \ref{ex:deltas} and we compute a representative $(v^\varepsilon)_\varepsilon$ of the direction of movement object.
 Let $P=\R$, $\mu_t=\delta_t$, $\mu=\mu_0=\delta_0$, $\eta=d\mu_t/dt|_{t=0}=-\partial_x\delta_0$. Let $\psi\colon\R\to[0,1]$ be a compactly-supported, $C^\infty$ function that is equal to 1 in a neighborhood of 0, and satisfies $\psi(x)=\psi(-x)$ and $\int_\R\psi(x)dx=1$. Also let $\psi_\varepsilon(x)=\frac1\varepsilon\psi(x/\varepsilon)$, $f_\varepsilon=\psi_\varepsilon*\mu=\psi_\varepsilon$, and $g_\varepsilon=\psi_\varepsilon*\eta=-\psi_\varepsilon'$, where the last equality is true because for all test functions $\phi\in C^\infty_c(\R)$, we have
 \begin{multline}\label{eq:convolutionderivative}
  \langle g_\varepsilon,\phi\rangle=\langle\eta,\psi_\varepsilon*\phi\rangle=\langle-\partial_x\delta_0,\psi_\varepsilon*\phi\rangle=\langle\delta_0,\partial_x(\psi_\varepsilon*\phi)\rangle\\
  =\langle\delta_0,(\partial_x\psi_\varepsilon)*\phi\rangle=(\partial_x\psi_\varepsilon)*\phi(0)=\langle- \partial_x\psi_\varepsilon,\phi\rangle.
 \end{multline}
 Also, in dimension 1 the divergence is simply the derivative.
 Thus in this case the smoothed version  \eqref{eq:continuitysmooth} of the continuity equation \eqref{eq:continuityeq} becomes
 \[(-\psi_\varepsilon')+(\psi_\varepsilon v^\varepsilon)'=0.\]
 Taking $v^\varepsilon\equiv1$ solves this equation. It also makes sense: this can be interpreted as movement to the right, which is exactly what the mass of the family $(\delta_t)_{t\in\R}$ is doing.
\end{example}

\begin{example}\label{ex:massexplicit}
 We work out explicitly the same situation as discussed in Example \ref{ex:explicit}, namely, $P=\R$, $U=(-\frac12,\frac12)$, $\mu=\chi_Udx$ the uniform distribution on $U$, $k\in \N$, $\eta=(-1)^k\partial^k\delta_0$. We will produce a representative $(v^\varepsilon)_\varepsilon$ of the subset of $\mathcal G(\R)$ that gives the notion of ``direction of the movement'' of the mass of $\mu$ when it is deformed in direction $\eta$.
 
 Let $\psi_\varepsilon$ be as in Example \ref{ex:massdeltas}, and $f_\varepsilon=\psi_\varepsilon*\mu$, the convolution. Note that for $0<\varepsilon\ll1$ small enough, we have that $f_\varepsilon\equiv 1$ in a neighborhood of 0 (in fact this is true in most of $U$) and $\psi_\varepsilon\equiv 0$ outside a small neighborhood of 0.
 
 We also let $g_\varepsilon=\psi_\varepsilon*\eta$. For reasons analogous to those outlined in \eqref{eq:convolutionderivative}, we then have that $g_\varepsilon=(-1)^k\psi_\varepsilon^{(k)}$.
 
 In this context, equation \eqref{eq:continuitysmooth} takes the form
 \begin{equation}\label{eq:continuityuniform}
  (-1)^k\psi^{(k)}_\varepsilon=f'_\varepsilon v^\varepsilon+f_\varepsilon (v^\varepsilon)'.
 \end{equation}
 Close to 0, since $f_\varepsilon\equiv 1$, this becomes
 \[
  (-1)^k\psi^{(k)}_\varepsilon=(v^\varepsilon)',
 \]
 so the solution in this region is obviously $v_\varepsilon=(-1)^k\psi_\varepsilon^{(k-1)}$.
 It is easy to see that this solution works globally since $\psi_\varepsilon\equiv 0$ slightly further away from zero. So $(v^\varepsilon)_\varepsilon=((-1)^k\psi^{(k-1)}_\varepsilon)_{0<\varepsilon<1}$ is the representative we were looking for.
\end{example}

 \bibliography{bib}{}
 \bibliographystyle{plain}
\end{document}